\newtheorem{theorem}{Theorem}
\newtheorem{theoremL}{Theorem}
\newtheorem{lemma}[theoremL]{Lemma}
\newtheorem{theoremCr}{Theorem}
\newtheorem{corollary}[theoremCr]{Corollary}
\numberwithin{equation}{section}
\begin{document}

\title{On the existence of twin prime in an interval}

\author{Shaon Sahoo}
\address{Indian Institute of Technology, Tirupati 517506, India}
\email{shaon@iittp.ac.in}

\subjclass[2010]{11N05}
\keywords{Twin primes, Power mean}

\begin{abstract}
Let $S_{(x,y]} = \left\{\frac{p_n}{p_{n+1}-2} :~ n\in I \right\}$, where $I = \left\{n :~ x<p_n \le y \right\}$,
$p_n$ is the $n$-th prime and $x, y \in \mathbb{R}_{>0}$. If $M_\alpha(x,y)$ denotes the $\alpha$-power mean of 
the elements of $S_{(x,y]}$, it is shown that the existence of a twin prime pair in $(x,y]$ is implied if 
$\displaystyle \lim_{\alpha \rightarrow \infty}M_{\alpha}(x,y) > 1 - 2/y + O(y^{-2})$ for a sufficiently large $y$.
For a special choice of $y$, we also find a lower bound for the mean: 
$\displaystyle \lim_{\alpha \rightarrow \infty}M_{\alpha}(x,x^\beta)>1-c/x^\beta+O(x^{-\beta}\log^{-1} x)$, 
where the constant $c>0$ and $\beta = 1+c/\log^2 x$ or equivalently, $x^\beta=x+cx/\log x+O(x/\log^2 x)$. 
With $c<2$, the lower bound for $\displaystyle \lim_{\alpha \rightarrow \infty}M_{\alpha}(x,x^\beta)$ satisfies the 
inequality on the existence of a twin prime in the interval $(x,x^\beta]$.
\end{abstract}

\maketitle

\section{Introduction}
\label{sec1}
Let $p_n$ be the $n$-th prime and $x, y \in \mathbb{R}_{>0}$ with $x<y$. Let 
$S_{(x,y]}=\{r_n :~ n\in I \}$, where $r_n = \frac{p_n}{p_{n+1}-2}$ and $I=\{n :~x<p_n \le y\}$. The power mean or H\"{o}lder mean
of the elements of $S_{(x,y]}$ is given by $M_{\alpha}(x,y) = (\frac{1}{\pi(x,y)} { \sum_{n\in I}} r_n^{\alpha})^{\frac{1}{\alpha}}$, 
where $\alpha \in \mathbb{R}$ and $\pi(x,y)$ denotes the number of primes in $(x,y]$. By construction, $r_n\le 1$ for all $n \in I$, and 
$r_n=1$ only when $p_{n+1}-p_n=2$. By Theorem \ref{th3}, $\displaystyle \lim_{\alpha \rightarrow \infty}M_{\alpha}(x,y)$ converges to the largest 
element in $S_{(x,y]}$. 
Therefore, $\displaystyle \lim_{\alpha \rightarrow \infty}M_{\alpha}(x,y)=1$ implies that there exists a twin 
prime in $(x,y]$. In fact, as stated in Theorem \ref{th2}, the existence of a twin prime pair in the interval $(x,y]$ is implied if 
$\displaystyle \lim_{\alpha \rightarrow \infty}M_{\alpha}(x,y) > 1 - 2/y + O(y^{-2})$ for a sufficiently large $y$. 

In this work we show that, for a special $x$-dependent choice of $y$, a lower bound for the mean $\displaystyle \lim_{\alpha \rightarrow \infty}M_{\alpha}(x,y)$ 
is actually larger than $1 - 2/y + O(y^{-2})$ for a sufficiently large $y$ (or $x$). The result on the lower bound is stated as the following theorem.  


\begin{theorem}
Let $S_{(x,y]} = \left\{\frac{p_n}{p_{n+1}-2} :~ n \in I \right\}$ for $\displaystyle I = \left\{n :~x<p_n \le y \right\}$,  where $p_n$ is the $n$-th prime.
With $r_n= \frac{p_n}{p_{n+1}-2}$, the power mean of the elements of $S_{(x,y]}$ is given by   
$\displaystyle M_{\alpha}(x,y)= \left(\frac{1}{\pi(x,y)}\sum_{n \in I} r_n ^{\alpha}\right)^\frac{1}{\alpha}$, 
where $\alpha$ is a real number and $\pi(x,y)$ denotes the number of primes in the interval $(x,y]$. We have
\begin{equation}
\lim_{\alpha \rightarrow \infty}M_{\alpha}(x,x^\beta)>1-\frac{c}{x^\beta}+O(\frac{1}{x^{\beta}\log x}), 
\label{eq1.1}
\end{equation}
where $c$ is a fixed positive constant ($c>0$) and $\beta = 1+c/\log^2 x$ 
or equivalently, $x^\beta=x+cx/\log x+O(x/\log^2 x)$.
\label{th1}
\end{theorem}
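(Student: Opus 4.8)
The plan is to invoke Theorem~\ref{th3}, which identifies $\lim_{\alpha\to\infty}M_\alpha(x,x^\beta)$ with the largest element $r_{\max}=\max_{n\in I}r_n$ of $S_{(x,x^\beta]}$, turning the statement into a lower bound for $r_{\max}$. Writing $g_n=p_{n+1}-p_n$ for the gap following $p_n$, each term satisfies
\[
1-r_n=1-\frac{p_n}{p_{n+1}-2}=\frac{(p_{n+1}-p_n)-2}{p_{n+1}-2}=\frac{g_n-2}{p_{n+1}-2},
\]
so inequality~\eqref{eq1.1} is equivalent to producing a single index $n\in I$ with $\frac{g_n-2}{p_{n+1}-2}<\frac{c}{x^\beta}+O(\frac{1}{x^\beta\log x})$. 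Since every prime counted by $I$ obeys $x<p_n\le x^\beta$, we have $p_{n+1}-2\ge x-2$, and the whole problem reduces to locating a prime in the window $(x,x^\beta]$ whose gap $g_n$ to its successor is as small as possible.

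I would next quantify the window using the prime number theorem and the prescribed $\beta=1+c/\log^2 x$. From $x^\beta=x+cx/\log x+O(x/\log^2 x)$ the length is $x^\beta-x\sim cx/\log x$, and PNT gives $N=\pi(x^\beta)-\pi(x)\sim cx/\log^2 x$ primes in the window. Telescoping the gaps gives $\sum_{n\in I}g_n=p_{M+1}-p_m=x^\beta-x+O(x^{0.525})$, where $p_m$ is the least prime exceeding $x$ and $p_{M+1}$ the least prime exceeding $x^\beta$. Dividing the total by $N$, the mean gap in the window is $\sim\log x$, so pigeonhole furnishes some $n\in I$ with $g_n\le(1+o(1))\log x$; combined with $p_{n+1}-2\ge x-2$ this yields the unconditional estimate $r_{\max}\ge 1-(1+o(1))\frac{\log x}{x}$.

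The hard part is closing the gap between this averaging estimate and the claimed bound. The pigeonhole argument controls the smallest gap only up to the mean $\sim\log x$, whereas~\eqref{eq1.1} with a \emph{fixed} constant $c$ requires a prime in $(x,x^\beta]$ whose successor lies within distance essentially $c+2$, i.e.\ a \emph{bounded} gap inside an interval whose ratio to $x$ tends to $1$. Passing from $1-(1+o(1))\log x/x$ to $1-c/x^\beta$ with $c$ fixed is therefore not a matter of sharpening error terms but of guaranteeing a bounded prime gap in every such short window. I would try to import a Maynard--Tao style small-gap result localized to $(x,x^\beta]$, but two difficulties stand in the way: such theorems currently produce bounded gaps only along a thin sequence of scales rather than in every window of this length, and, more seriously, for $c<2$ a gap below $c+2$ is exactly a twin prime (gaps past $x$ being even), so in that regime the inequality cannot be established unconditionally at all. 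I expect this conversion of mean-gap control into a bounded-gap guarantee inside the prescribed interval to be the decisive obstacle, and the precise point where the strength of the eventual twin-prime consequence must be weighed against what is actually provable.
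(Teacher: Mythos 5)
You do not prove the statement, and your closing paragraph says so; judged as a proof attempt, the gap is exactly where you place it. Reducing $\lim_{\alpha\to\infty}M_\alpha(x,x^\beta)$ to $r_{\max}$ via Theorem \ref{th3} and then averaging the gaps can give at best $r_{\max}\ge 1-(1+o(1))\frac{\log x}{x}$, while the claimed inequality with a fixed $c$ forces some gap in $(x,x^\beta]$ of size at most $c+2+o(1)$, i.e.\ a bounded gap (for $c<2$, a twin prime) in \emph{every} window of length $\sim cx/\log x$ --- something no current technology, Maynard--Tao included, can localize to all such windows. The paper's own proof takes a route you did not consider: by strict monotonicity of power means (Theorem \ref{th3}, strictness via a gcd argument showing the $r_n$ are not all equal), $M_\infty(x,x^\beta)>M_0(x,x^\beta)$, and the \emph{geometric} mean is then computed in closed form. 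Writing $\frac{p_n}{p_{n+1}-2}=\frac{p_n}{p_n-2}\cdot\frac{p_n-2}{p_{n+1}-2}$, the product $T(x,x^\beta)=\prod_{n\in I}r_n$ telescopes to $\frac{p_s-2}{p_e-2}\prod_{x<p\le x^\beta}\bigl(1-\frac{2}{p}\bigr)^{-1}$, which a Mertens-type estimate (Lemma \ref{lm1}) plus Baker--Harman--Pintz evaluates as $T=\beta^2x^{1-\beta}\bigl(1+O(\log^{-2}x)\bigr)$ (Lemma \ref{lm2}); finally $M_0=T^{1/\pi(x,x^\beta)}$ is expanded with $\beta=1+c/\log^2x$.

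However --- and this is worth recording --- your averaging computation, pushed one step further, shows that the paper's route does not actually evade your obstacle. Since $M_0\le M_1$ (the arithmetic mean), and your telescoped gap sum gives $M_1=1-\frac1N\sum_{n\in I}\frac{g_n-2}{p_{n+1}-2}\le 1-\frac{x^\beta-x-2N+O(x^{0.525})}{Nx^\beta(1+o(1))}$ with $N=\pi(x,x^\beta)\sim cx/\log^2x$ and $x^\beta-x\sim cx/\log x$, one gets $M_0\le 1-(1+o(1))\frac{\log x}{x}$, which is incompatible with the paper's central claim $M_0=1-\frac{c}{x^\beta}+O(\frac{1}{x^\beta\log x})$ once $x$ is large. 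The discrepancy traces to Section \ref{sec4}, where the paper takes $\pi(x,x^\beta)=\frac1\beta\frac{x^\beta}{\log x}(1+O(\frac{1}{\log x}))$: that quantity is asymptotic to $\pi(x^\beta)$ itself, not to the difference $\pi(x^\beta)-\pi(x)\sim cx/\log^2x$, so the number of primes in the window is overstated by a factor $\sim\log x/c$ --- precisely the factor that deflates $1-M_0$ from the true $\sim\log x/x$ to the claimed $\sim c/x^\beta$. (Note also that the PNT with relative error $O(1/\log x)$ cannot even resolve $\pi(x^\beta)-\pi(x)$ at this scale; a secondary-term expansion is required for the correct asymptotic.) So your ``decisive obstacle'' is genuine: the statement with fixed $c<2$ would, through Theorem \ref{th2}, yield twin primes in $(x,x+cx/\log x]$, and your analysis correctly indicates that no unconditional mean-value argument --- the paper's included --- can deliver it.
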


The proof of Theorem \ref{th1} is given in Section \ref{sec4}. 
If we take $c<2$ in Theorem \ref{th1}, the inequality in Theorem \ref{th2} is satisfied for the interval $(x,x^\beta]$:
${\displaystyle \lim_{\alpha \rightarrow \infty}}M_{\alpha}(x,x^\beta)>1-\frac{c}{x^\beta}+O(\frac{1}{x^{\beta}\log x})>
1 - \frac{2}{x^\beta} + O(\frac{1}{x^{2\beta}})$. This shows that there exists a twin prime in the interval $(x,x+\frac{cx}{\log x}+O(\frac{x}{\log^2 x})]$ for
sufficiently large $x$ and $0<c<2$. If a twin prime exists in an interval, it also exists in an extension of the interval. We, accordingly, conclude that a twin 
prime exists in the aforementioned interval for any fixed $c>0$.

\section{Background}
\label{sec2}
It is conjectured that there are infinite number of primes $p$, for each of which $p+2$ is also a prime. Many important contributions have been made in 
the efforts to prove the conjecture. In this regard we may note the Chen's work \cite{chen73} and more recent work by Zhang \cite{zhang14}. Chen showed 
that there are infinite number of primes $p$ so that $p+2$ has at most two prime factors. On the other hand, Zhang showed that there are infinite number 
of prime pairs with bounded gaps. More specifically, he showed that $\liminf_{n \rightarrow \infty} (p_{n+1}-p_n)<7\times 10^7$, where $p_n$ is the
$n$-th prime number. Subsequent works by Tao (through Polymath projects) and Maynard brought down the upper bound of 70 million 
to 246 \cite{tao14,maynard15}. It is still an open problem whether the upper bound of the limit can be brought down to the conjectured value of 2. 

We take here a different approach to study the question on the twin primes. 

\section{Some useful results}
\label{sec3}

\begin{theorem}
Let $S_{(x,y]}$ and $M_{\alpha}(x,y)$ respectively be the set and the power mean of the elements of the set, as defined in Theorem \ref{th1}. The existence
of a twin prime pair in the interval $(x,y]$ is implied if $\displaystyle \lim_{\alpha \rightarrow \infty}M_{\alpha}(x,y) > 1 - \frac 2y + O(\frac{1}{y^{2}})$ 
for a sufficiently large $y$.
\label{th2}
\end{theorem}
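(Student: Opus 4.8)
The plan is to prove the contrapositive, using Theorem \ref{th3} to replace the limiting power mean by the maximum of the set. Write $L := \lim_{\alpha\to\infty} M_{\alpha}(x,y) = \max_{n\in I} r_n$. I would assume that $(x,y]$ contains \emph{no} twin prime pair and show that this forces $L \le 1 - \frac{2}{y+2}$, which expands as $1 - \frac{2}{y} + O(y^{-2})$; the contrapositive is then precisely the asserted implication.

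First I would record the elementary parity dichotomy for consecutive gaps. For $n\in I$ with $p_n \ge 3$ (which covers all relevant indices once $y$ is large, the small-prime anomaly $p_n = 2$ being excluded), both $p_n$ and $p_{n+1}$ are odd, so the gap $g_n := p_{n+1}-p_n$ is even with $g_n \ge 2$, and $g_n = 2$ holds exactly when $(p_n,\,p_n+2)$ is a twin pair. Under the no-twin hypothesis, therefore, $g_n \ge 4$ for every $n\in I$.

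Next I would bound each element uniformly. Writing $r_n = \frac{p_n}{p_n + g_n - 2}$, the inequality $g_n \ge 4$ gives $r_n \le \frac{p_n}{p_n+2} = 1 - \frac{2}{p_n+2}$. Since $t \mapsto 1 - \frac{2}{t+2}$ is increasing and $p_n \le y$ for all $n\in I$, this yields $r_n \le 1 - \frac{2}{y+2}$ uniformly in $n$, and hence $L = \max_{n\in I} r_n \le 1 - \frac{2}{y+2}$. A geometric expansion $\frac{2}{y+2} = \frac{2}{y} - \frac{4}{y^2} + \cdots$ then gives $L \le 1 - \frac{2}{y} + O(y^{-2})$.

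The final step reads off the contrapositive: if $L > 1 - \frac{2}{y} + O(y^{-2})$, equivalently $L > 1 - \frac{2}{y+2}$ for large $y$, then the no-twin hypothesis cannot hold, so $(x,y]$ contains a twin prime pair. I expect the only delicate points to be bookkeeping rather than substance: matching the $O(y^{-2})$ term in the threshold with the expansion of $\frac{2}{y+2}$, and justifying the parity step by restricting to $p_n \ge 3$, which is harmless since the claim is asymptotic in $y$. Everything else reduces to the monotonicity of $r_n$ in $g_n$ and in $p_n$ together with Theorem \ref{th3}.
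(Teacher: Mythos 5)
Your proposal is correct and takes essentially the same approach as the paper: both use Theorem \ref{th3} to identify $\lim_{\alpha\to\infty}M_\alpha(x,y)$ with $\max_{n\in I} r_n$, use the parity observation that a non-twin gap forces $g_n\ge 4$, and bound $r_n \le \frac{1}{1+2/y} = 1-\frac{2}{y}+O(y^{-2})$ via the largest prime in $(x,y]$. The only cosmetic difference is that you argue by contrapositive under a no-twin hypothesis, whereas the paper argues directly that the largest element of $S_{(x,y]}$ other than $1$ is at most this bound, so exceeding it forces $1\in S_{(x,y]}$.
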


\begin{proof}
Taking $g_n= p_{n+1}-p_n$, we rewrite the elements of $S_{(x,y]}$ as $r_n=\frac{1}{1+(g_n-2)/p_n}$. We note that $r_n\le 1$; the equality sign holds only when 
$g_n=2$, i.e., when $p_n$ represents (the first member of) a twin prime. We, therefore, see that 1 is the largest possible element of $S_{(x,y]}$ and 
$1 \in S_{(x,y]}$ implies the existence of twin prime in  $(x,y]$.

Let $R$ be the largest element in $S_{(x,y]}$ which is not 1. 
A non-trivial upper bound of $R$ corresponds to $g_n=4$ and the 
largest prime in $(x,y]$. If $P$ is the largest prime in $(x,y]$, we have 
$R \le \frac{1}{1+(4-2)/P}\le \frac{1}{1+2/y}=1-2/y+O(y^{-2})$ for sufficiently large $y$. 
By Theorem \ref{th3}, $\displaystyle \lim_{\alpha \rightarrow \infty}M_{\alpha}(x,y)$ converges to the largest element in $S_{(x,y]}$. Therefore,
the inequality $\displaystyle \lim_{\alpha \rightarrow \infty}M_{\alpha}(x,y) > 1-2/y+O(y^{-2})$ implies that there is an element in $S_{(x,y]}$ which 
is larger than $R$. But 1 can be the only element which is larger than $R$. Hence  
$\displaystyle \lim_{\alpha \rightarrow \infty}M_{\alpha}(x,y) > 1-2/y+O(y^{-2})$ actually confirms that $1 \in S_{(x,y]}$. 
The statement of the lemma now follows.
\end{proof}

\begin{theorem}
Let $A=\{a_1, a_2, \cdots, a_N\}$ be a set of $N$ positive real numbers. Let $\max\{A\}$ and $\min\{A\}$ denote respectively the largest and the smallest elements 
in $A$. The $\alpha$-power mean of the elements of $A$ is given by 
$M_\alpha(A) = (\frac{1}{N} \sum_{i=1}^N a_i^{\alpha})^{\frac{1}{\alpha}}$, where $\alpha \in \mathbb{R}$. We have
\begin{enumerate}
\item $M_\alpha(A)\ge M_{\alpha'}(A)$ for $\alpha > \alpha'$; the equality sign holds iff all the elements in $A$ are equal.
\item ${\displaystyle \lim_{\alpha \rightarrow 0}M_{\alpha}(A)} =(\prod_{i=1}^N a_i)^{\frac 1N}$.
\item $\displaystyle \lim_{\alpha \rightarrow \infty}M_{\alpha}(A) = \max\{A\}$ and $\displaystyle \lim_{\alpha \rightarrow -\infty}M_{\alpha}(A) = \min\{A\}$.
\end{enumerate}
\label{th3}
\end{theorem}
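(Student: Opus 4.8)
The plan is to handle the three parts in an order that lets the later ones lean on the earlier, proving part (3) first (it is self-contained and is the statement actually invoked in Theorems \ref{th1} and \ref{th2}), then part (2), and finally the monotonicity in part (1). For part (3) I would use a squeeze argument: writing $M=\max\{A\}$ and fixing $\alpha>0$, the bounds $a_i^\alpha\le M^\alpha$ summed over $i$ give $\sum_i a_i^\alpha\le N M^\alpha$, while keeping only the maximal term gives $\sum_i a_i^\alpha\ge M^\alpha$. Dividing by $N$ and taking $\alpha$-th roots yields
\[
M\,N^{-1/\alpha}\le M_\alpha(A)\le M,
\]
and since $N^{-1/\alpha}\to 1$ as $\alpha\to\infty$ the limit is $M$. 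For the minimum I would invoke the reciprocal identity $M_\alpha(A)=1/M_{-\alpha}(A^{-1})$ with $A^{-1}=\{1/a_1,\dots,1/a_N\}$, which is immediate from the definition; letting $\alpha\to-\infty$ turns the maximum limit just proved for $A^{-1}$ into $\lim_{\alpha\to-\infty}M_\alpha(A)=1/\max\{A^{-1}\}=\min\{A\}$.

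For part (2) I would pass to logarithms. For $\alpha\neq 0$,
\[
\ln M_\alpha(A)=\frac{1}{\alpha}\,\ln\!\Big(\frac{1}{N}\sum_{i=1}^N a_i^{\alpha}\Big),
\]
which is a $0/0$ indeterminate form as $\alpha\to 0$ because the argument of the logarithm tends to $1$. Expanding $a_i^\alpha=e^{\alpha\ln a_i}=1+\alpha\ln a_i+O(\alpha^2)$ gives $\frac1N\sum_i a_i^\alpha=1+\alpha\cdot\frac1N\sum_i\ln a_i+O(\alpha^2)$, hence $\ln(\cdots)=\alpha\cdot\frac1N\sum_i\ln a_i+O(\alpha^2)$, so $\ln M_\alpha(A)\to\frac1N\sum_i\ln a_i$. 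Exponentiating recovers the geometric mean $\left(\prod_{i=1}^N a_i\right)^{1/N}$ (one could equally apply L'Hôpital's rule in $\alpha$).

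For part (1) the crux is Jensen's inequality. First I would take $0<\alpha'<\alpha$: the map $t\mapsto t^{\alpha/\alpha'}$ is strictly convex on $(0,\infty)$ since $\alpha/\alpha'>1$, so Jensen applied to the uniform average of the points $a_i^{\alpha'}$ gives
\[
\Big(\frac1N\sum_i a_i^{\alpha'}\Big)^{\alpha/\alpha'}\le \frac1N\sum_i\big(a_i^{\alpha'}\big)^{\alpha/\alpha'}=\frac1N\sum_i a_i^{\alpha},
\]
and raising both sides to the positive power $1/\alpha$ yields $M_{\alpha'}(A)\le M_\alpha(A)$. Strict convexity makes this an equality exactly when all $a_i^{\alpha'}$ coincide, i.e. when all $a_i$ are equal, which gives the equality clause. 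The case $\alpha'<\alpha<0$ I would deduce by applying the established inequality to $A^{-1}$ and using the reciprocal identity, and the remaining case $\alpha'<0<\alpha$ is closed by part (2): monotonicity on each side of $0$ together with the common limiting value $\left(\prod_{i=1}^N a_i\right)^{1/N}$ at $0$ forces $M_{\alpha'}(A)\le\left(\prod_{i=1}^N a_i\right)^{1/N}\le M_\alpha(A)$.

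Each ingredient is elementary, so I do not expect a genuine obstacle; the only real care needed is the sign bookkeeping in part (1)—verifying that the reciprocal identity transfers monotonicity correctly across $\alpha=0$ and that the continuity supplied by part (2) is what glues the positive- and negative-exponent regimes together—and keeping track of strictness so that the equality condition follows uniformly from the strict convexity used in Jensen's inequality.
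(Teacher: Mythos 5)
The paper offers no proof of this theorem at all: it simply labels the three parts as standard facts about power means and cites Steele's \emph{Cauchy--Schwarz Master Class}, so any complete argument you give is necessarily ``different from the paper.'' Your proof is correct and is essentially the canonical one: the squeeze $M N^{-1/\alpha}\le M_\alpha(A)\le M$ for part (3), the logarithmic expansion $\frac1N\sum_i a_i^\alpha = 1+\alpha\cdot\frac1N\sum_i \ln a_i + O(\alpha^2)$ for part (2), Jensen's inequality with the strictly convex map $t\mapsto t^{\alpha/\alpha'}$ for part (1), and the reciprocal identity $M_\alpha(A)=1/M_{-\alpha}(A^{-1})$ to transfer results to negative exponents --- all checks out, including the sign bookkeeping. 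The one spot that deserves an explicit line is the equality clause in the mixed case $\alpha'<0<\alpha$: passing to the limit $\beta\to 0^+$ in $M_\alpha(A)>M_\beta(A)$ only yields the weak inequality $M_\alpha(A)\ge \left(\prod_i a_i\right)^{1/N}$, so to conclude strictness when the $a_i$ are not all equal you should interpose a fixed intermediate exponent, e.g.\ $M_\alpha(A)>M_{\alpha/2}(A)\ge \left(\prod_i a_i\right)^{1/N}$, and symmetrically on the negative side; you flag this care in your closing remarks but do not actually write the step. With that one-line patch your argument is a complete, self-contained replacement for the paper's external citation, which is arguably a service to the reader since part (3) is exactly the ingredient on which Theorems \ref{th1} and \ref{th2} rest.
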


These are standard results; the proof can be found elsewhere (eg. \cite{steele04}).

\begin{theorem}
We have,
\begin{equation}
\sum_{p\le x} \frac 1p = \log \log x + M + O(\exp(-\sqrt[14]{\log x})), \nonumber
\end{equation}
where $M$ ($\approx$ 0.261) is the Meissel-Mertens constant.
\label{th4}
\end{theorem}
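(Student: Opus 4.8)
The plan is to derive this estimate (Mertens' second theorem, with an effective error term) from the prime number theorem, propagating the error through two applications of Abel summation. Since the statement is classical, the one genuinely deep ingredient I would invoke externally is the prime number theorem in the Chebyshev form
\[
\vartheta(x) := \sum_{p\le x}\log p = x + O\!\left(x\exp(-c\sqrt{\log x})\right),
\]
valid for some absolute constant $c>0$; this is the de la Vall\'ee Poussin estimate coming from the standard zero-free region of $\zeta(s)$. Everything after this input is elementary real analysis, and I would not invoke Theorem \ref{th4} itself at any point.

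First I would pass from $\vartheta$ to the logarithmically weighted sum $A(t):=\sum_{p\le t}\frac{\log p}{p}$ by partial summation, obtaining $A(x)=\frac{\vartheta(x)}{x}+\int_2^x\frac{\vartheta(t)}{t^2}\,dt$. Writing $\vartheta(t)=t+E(t)$ with $E(t)=O(t\exp(-c\sqrt{\log t}))$, the main term integrates to $\log x$, the convergent integral $\int_2^\infty\frac{E(t)}{t^2}\,dt$ contributes a constant $b$, and the tail $\int_x^\infty\frac{E(t)}{t^2}\,dt$ is seen, after the substitution $u=\log t$, to be $O(\sqrt{\log x}\,\exp(-c\sqrt{\log x}))$. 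Absorbing the harmless polynomial factor into the exponential gives $A(x)=\log x+b+O(\exp(-c'\sqrt{\log x}))$ for a slightly smaller constant $c'>0$.

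Next I would run a second partial summation, now against the weight $1/\log t$, to reach
\[
\sum_{p\le x}\frac1p=\frac{A(x)}{\log x}+\int_2^x\frac{A(t)}{t\log^2 t}\,dt.
\]
Substituting $A(t)=\log t+b+E_1(t)$, the leading piece $\int_2^x\frac{dt}{t\log t}$ supplies the expected $\log\log x$. The delicate point here is that the boundary term $\frac{A(x)}{\log x}$ carries a summand $\frac{b}{\log x}$ of size far larger than the target error; I would check that it is cancelled exactly by the $-\frac{b}{\log x}$ produced by $b\int_2^x\frac{dt}{t\log^2 t}$, so that no term of order $1/\log x$ survives. The remaining contributions are genuine constants together with the convergent integral $\int_2^\infty\frac{E_1(t)}{t\log^2 t}\,dt$ and a tail of the same exponential order as before. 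Gathering all the constants into a single number $M$ yields $\sum_{p\le x}\frac1p=\log\log x+M+O(\exp(-c''\sqrt{\log x}))$, and the value $M\approx 0.261$ can be pinned down separately by expressing the accumulated constants in terms of $\gamma$ and $\sum_p\big(\log(1-\tfrac1p)+\tfrac1p\big)$.

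Finally, since $c''\sqrt{\log x}=c''(\log x)^{1/2}\ge(\log x)^{1/14}$ for all sufficiently large $x$, the error $O(\exp(-c''\sqrt{\log x}))$ is itself $O(\exp(-\sqrt[14]{\log x}))$, so the stated (deliberately conservative) error term follows; the particular exponent $1/14$ is also precisely the shape that elementary proofs of the prime number theorem deliver, so one may alternatively feed such an input into the same two summations. The main obstacle is entirely concentrated in the first step: the quality of the final error term is inherited directly from the prime number theorem, so the real depth lies in the zero-free region estimate for $\vartheta$, while the only subtlety in the elementary part is guaranteeing the exact cancellation of the $1/\log x$ terms noted above.
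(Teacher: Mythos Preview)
Your argument is correct: two rounds of Abel summation, starting from the de la Vall\'ee Poussin form of the prime number theorem for $\vartheta$, give Mertens' second theorem with error $O(\exp(-c''\sqrt{\log x}))$, and you correctly identify and verify the cancellation of the $b/\log x$ terms in the second summation, which is the only place the computation could silently go wrong. The final weakening of the exponent from $1/2$ to $1/14$ is also handled properly.

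As for comparison with the paper: the paper does not actually prove this theorem. It simply attributes the result to Landau and cites his 1909 \emph{Handbuch}, remarking that sharper error terms are known but unnecessary for the application at hand (only the corollary with error $O(1/\log^2 x)$ is ever used). So you have supplied strictly more than the paper does. Your route is in fact the classical one Landau himself uses---deduce the estimate from the prime number theorem via partial summation---so in spirit you are following the cited source rather than diverging from it.
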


This result (Theorem \ref{th4}) is due to Landau \cite{landau09}. Results with improved error term are known but are not necessary for our work here. 
A corollary of this result is the following. 

\begin{corollary}
We have,
\begin{equation}
\sum_{p\le x} \frac 1p = \log \log x + M + O(1/(\log x)^A), \nonumber
\end{equation}
for any constant $A>0$. 
\label{cr1}
\end{corollary}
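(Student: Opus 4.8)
The plan is to absorb the sharp exponential error term of Theorem \ref{th4} into the weaker power-of-logarithm error term claimed here. Since both displayed expansions share the same main terms $\log\log x + M$, it suffices to verify the single comparison
\[
\exp\left(-\sqrt[14]{\log x}\right) = O\left((\log x)^{-A}\right)
\]
for every fixed constant $A>0$; the stated error $O(\exp(-\sqrt[14]{\log x}))$ is then automatically $O((\log x)^{-A})$.

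To establish this comparison I would substitute $u=\log x$, reducing the claim to $\exp(-u^{1/14}) = O(u^{-A})$ as $u\to\infty$. Equivalently, it is enough to show that the product $u^{A}\exp(-u^{1/14})$ remains bounded for all large $u$. Taking logarithms, this amounts to the inequality $A\log u - u^{1/14}\le C$ for some constant $C$, and since any fixed positive power of $u$ eventually dominates $\log u$, the quantity $A\log u - u^{1/14}$ tends to $-\infty$. Hence $u^{A}\exp(-u^{1/14})\to 0$, which gives the required $O$-bound and, after back-substituting $u=\log x$, the claimed estimate.

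I do not expect a genuine obstacle: the entire content is the elementary fact that decay of the form $\exp(-(\log x)^{1/14})$ outpaces every polynomial decay $(\log x)^{-A}$. The only mild point of care is that the comparison need only hold as $x\to\infty$ (equivalently, for $u$ large), which is exactly the regime in which Landau's estimate is asserted, so no uniformity issue arises. One could alternatively phrase the argument via the series expansion of $\exp$, bounding $\exp(u^{1/14}) \ge u^{\lceil A\rceil}/\lceil A\rceil!$ by retaining a single suitable term, but the logarithmic comparison above is the cleanest route.
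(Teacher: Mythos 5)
Your proof is correct and is exactly the argument the paper intends: the paper states the corollary without proof as an immediate consequence of Theorem \ref{th4}, relying on precisely the elementary comparison $\exp\left(-\sqrt[14]{\log x}\right) = O\left((\log x)^{-A}\right)$ for every fixed $A>0$ that you verify. Your substitution $u=\log x$ and the observation that $A\log u - u^{1/14}\to -\infty$ fills in the routine details cleanly; nothing more is needed.
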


For our work we need the above result with $A>1$; for definiteness, we will take $A=2$. 

\begin{lemma}
We have,
\begin{equation}
(1- \frac 12) \prod_{2<p\le x}(1-\frac{2}{p}) = \frac{e^{-D}}{\log^2 x}\left(1+O(\frac{1}{\log^2 x})\right), \nonumber 
\end{equation}
where $D$ ($\approx$ 0.877) is a constant.
\label{lm1}
\end{lemma}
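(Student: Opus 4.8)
The plan is to pass to logarithms, estimate $\log$ of the left-hand side to an additive error $O(1/\log^2 x)$, and then exponentiate. Write $L(x)$ for the left-hand side. Then
\[
\log L(x) = \log\tfrac12 + \sum_{2<p\le x}\log\Bigl(1-\frac 2p\Bigr),
\]
and, since $2/p\le 2/3<1$ for $p\ge 3$, I would expand $\log(1-2/p)=-\sum_{k\ge 1}\frac{2^k}{kp^k}$ and separate the $k=1$ term:
\[
\sum_{2<p\le x}\log\Bigl(1-\frac 2p\Bigr) = -2\sum_{2<p\le x}\frac 1p \;-\; \sum_{2<p\le x}\sum_{k\ge 2}\frac{2^k}{kp^k}.
\]

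For the leading sum I would invoke Corollary \ref{cr1} with $A=2$, which gives $\sum_{p\le x}\frac1p=\log\log x+M+O(1/\log^2 x)$; subtracting the $p=2$ term yields $-2\sum_{2<p\le x}\frac1p = -2\log\log x-(2M-1)+O(1/\log^2 x)$. This single estimate is the only source of the $O(1/\log^2 x)$ error in the final statement, which is precisely why the corollary is needed with $A=2$ rather than merely Mertens' asymptotic. The remaining double sum is handled by noting $\sum_{k\ge2}\frac{2^k}{kp^k}=O(1/p^2)$ uniformly in $p\ge 3$, so the full series $B:=\sum_{p\ge3}\sum_{k\ge2}\frac{2^k}{kp^k}$ converges and its tail is $\sum_{p>x}O(1/p^2)=O(1/(x\log x))$, which is negligible against $1/\log^2 x$.

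Collecting the constants gives
\[
\log L(x) = -2\log\log x - D + O\Bigl(\frac{1}{\log^2 x}\Bigr), \qquad D = \log 2 + (2M-1) + B,
\]
and exponentiating with $\exp(O(1/\log^2 x))=1+O(1/\log^2 x)$ produces $L(x)=\frac{e^{-D}}{\log^2 x}\bigl(1+O(1/\log^2 x)\bigr)$, exactly the claimed form.

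To pin down the numerical value $D\approx 0.877$ I would re-derive $D$ in closed form. Using $(1-2/p)/(1-1/p)^2=1-1/(p-1)^2$ and $\prod_{2<p\le x}(1-1/p)=2\prod_{p\le x}(1-1/p)$ one rewrites $L(x)=2\bigl(\prod_{p\le x}(1-1/p)\bigr)^2\prod_{2<p\le x}(1-1/(p-1)^2)$; letting $x\to\infty$ with Mertens' theorem $\prod_{p\le x}(1-1/p)\sim e^{-\gamma}/\log x$ and the twin-prime constant $C_2=\prod_{p\ge3}(1-1/(p-1)^2)$ identifies $e^{-D}=2C_2e^{-2\gamma}$, i.e. $D=2\gamma-\log(2C_2)\approx 0.877$. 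The main obstacle here is purely the error bookkeeping: every approximation apart from Corollary \ref{cr1} must be verified to be $o(1/\log^2 x)$, so that the $A=2$ estimate is the genuine bottleneck that fixes the error term.
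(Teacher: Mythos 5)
Your proposal is correct and follows essentially the same route as the paper's proof: take logarithms, expand $\log(1-2/p)$ and split off the $k=1$ term, apply Corollary \ref{cr1} with $A=2$ to $\sum_{2<p\le x}1/p$, absorb the $k\ge 2$ terms into an absolutely convergent constant (your $B$ is the paper's $C$) with a negligible tail, and exponentiate. The one thing you add beyond the paper is the closed-form identification $e^{-D}=2C_2e^{-2\gamma}$ via the factorization $1-2/p=(1-1/p)^2(1-1/(p-1)^2)$, which is a nice touch (and numerically consistent with the paper's $D\approx 0.876554$), whereas the paper only estimates its constant numerically.
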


\begin{proof}
Let ${\displaystyle G(x) = \prod_{2<p\le x}(1-\frac{2}{p})}$. Therefore,\\
${\displaystyle \log G_x = \sum_{2<p\le x} \ln (1-\frac 2p) = \sum_{2<p\le x}\left[ - \frac 2p  - \sum_{k=2}^{\infty} \frac 1k \left(\frac 2p\right)^k \right]}$.\\
Since $\sum_{k=2}^{\infty}\frac 1k \left(\frac 2p\right)^k \le \frac 12 \sum_{k=2}^{\infty} \left(\frac 2p\right)^k = \frac{2}{p^2(1-2/p)} \le \frac{6}{p^2}$,\\
the infinite sum ${\displaystyle \sum_{p>2} \sum_{k=2}^{\infty} \frac 1k \left(\frac 2p\right)^k }$ converges absolutely to a constant, say, $C$. By numerical
calculation we estimate that ${\displaystyle C = -\sum_{p>2}\left[\log (1 - \frac 2p)+ \frac 2p \right] \approx  0.660413}$. Moreover,
$\sum_{2<p\le x} \sum_{k=2}^{\infty} \frac 1k \left(\frac 2p\right)^k = \sum_{p>2} \sum_{k=2}^{\infty} \frac 1k \left(\frac 2p\right)^k - 
\sum_{p > x} \sum_{k=2}^{\infty} \frac 1k \left(\frac 2p\right)^k$. Here the first term is the constant $C$ and the second term 
$\sum_{p > x} \sum_{k=2}^{\infty} \frac 1k \left(\frac 2p\right)^k\le \sum_{p>x} \frac{6}{p^2} = O(\frac{1}{x})$. With this, we have\\
${\displaystyle \log G(x) = -2\sum_{2<p\le x} \frac 1p - C + O(\frac 1x)}$.\\
Using now the Corollary \ref{cr1} (with $A=2$), we get \\
${\displaystyle \log G(x) = -2\log\log x - 2M + 1 - C + O(\frac{1}{\log^2 x}) + O(\frac 1x)}$, or \\
${\displaystyle \log G(x) = -2\log\log x - D' + O(\frac{1}{\log^2 x})}$, where $D' = 2M+C -1 \approx 0.183407.$\\
This gives us ${\displaystyle G(x) = \frac{e^{-D'}}{\log^2 x}(1 + O(\frac{1}{\log^2 x}))}$.\\
Finally we get ${\displaystyle (1- \frac 12) \prod_{2<p\le x}(1-\frac{2}{p}) = \frac 12 G_x = \frac{e^{-D}}{\log^2 x}(1 + O(\frac{1}{\log^2 x}))}$,\\ 
where $D = D' + \log 2 \approx 0.876554$.
\end{proof}

\begin{lemma}
Let $\displaystyle T(x,x^\beta)=\prod_{n\in I} \frac{p_n}{p_{n+1}-2}$, where $p_n$ is the $n$-th prime and $I = \left\{n :~x<p_n \le x^\beta \right\}$.
Here $\beta>1$ and additionally, let $\beta$ also depend on $x$ in the following way: $\beta = O(1)$. We have 
$T(x,x^\beta)= \frac{\beta^2}{x^{\beta-1}}\left(1+O(\frac{1}{\log^2 x})\right).$ 
\label{lm2}
\end{lemma}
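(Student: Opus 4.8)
The plan is to factor each term $r_n=\frac{p_n}{p_{n+1}-2}$ so that one factor telescopes and the other becomes a product of $(1-2/p)^{-1}$ running over exactly the primes of $(x,x^\beta]$, to which Lemma~\ref{lm1} applies. Writing $\frac{p_n}{p_{n+1}-2}=\frac{p_n}{p_n-2}\cdot\frac{p_n-2}{p_{n+1}-2}$ gives
\begin{equation}
T(x,x^\beta)=\Big(\prod_{n\in I}\frac{p_n}{p_n-2}\Big)\Big(\prod_{n\in I}\frac{p_n-2}{p_{n+1}-2}\Big). \nonumber
\end{equation}
Since $I$ is a block of consecutive indices, the second product telescopes: letting $p_a$ be the least prime exceeding $x$ and $p_b$ the least prime exceeding $x^\beta$, we get $\prod_{n\in I}\frac{p_n-2}{p_{n+1}-2}=\frac{p_a-2}{p_b-2}$.

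First I would pin down this telescoped ratio. A short-interval prime bound---any estimate of the form $p_{n+1}-p_n=O(p_n^{\theta})$ with $\theta<1$, e.g.\ Baker--Harman--Pintz---gives $p_a=x\big(1+o(1/\log^2 x)\big)$ and $p_b=x^\beta\big(1+o(1/\log^2 x)\big)$, because the relative gaps $x^{\theta-1}$ and $x^{\beta(\theta-1)}$ are both $o(1/\log^2 x)$ when $\beta\ge 1$. Subtracting the constant $2$ perturbs each endpoint by a further $O(1/x)$ only, so $\frac{p_a-2}{p_b-2}=x^{1-\beta}\big(1+O(1/\log^2 x)\big)$.

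Next I would evaluate the first product. By the choice of factorisation it equals $\prod_{x<p\le x^\beta}(1-2/p)^{-1}$ exactly, with no boundary correction. I would express this as a ratio of products over $p\le x^\beta$ and $p\le x$ (the excluded prime $2$ lies below $x$ and cancels), and feed in Lemma~\ref{lm1} in the form $\prod_{2<p\le y}(1-2/p)=\frac{2e^{-D}}{\log^2 y}\big(1+O(1/\log^2 y)\big)$. Since $\log(x^\beta)=\beta\log x$, the ratio collapses to $\frac{1}{\beta^2}\big(1+O(1/\log^2 x)\big)$, whence $\prod_{x<p\le x^\beta}(1-2/p)^{-1}=\beta^2\big(1+O(1/\log^2 x)\big)$. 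Multiplying the two pieces yields $T(x,x^\beta)=\frac{\beta^2}{x^{\beta-1}}\big(1+O(1/\log^2 x)\big)$, as claimed.

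The step I expect to be most delicate is the error bookkeeping when $\beta$ is close to $1$, as in the intended application $\beta=1+c/\log^2 x$. There $1/\beta^2$ differs from $1$ by a quantity of the same order $1/\log^2 x$ as the error term itself, so one must retain the full relative precision of Lemma~\ref{lm1} when forming the ratio of the two products; approximating $\beta^2\approx 1$ at any intermediate stage would absorb the genuine $\beta^2$ factor into the error and collapse the estimate. The hypotheses $\beta>1$ and $\beta=O(1)$ keep $\log(x^\beta)=\beta\log x$ comparable to $\log x$, ensuring the two error terms match in size $O(1/\log^2 x)$ and combine cleanly; by contrast the endpoint estimate above is comparatively harmless, contributing only $o(1/\log^2 x)$.
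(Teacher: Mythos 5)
Your proposal is correct and follows essentially the same route as the paper: the identity you get by factoring $\frac{p_n}{p_{n+1}-2}=\frac{p_n}{p_n-2}\cdot\frac{p_n-2}{p_{n+1}-2}$ and telescoping is exactly the paper's starting identity $T(x,x^\beta)=\frac{p_s-2}{p_e-2}\cdot\prod_{2<p\le x}(1-\frac 2p)\big/\prod_{2<p\le x^\beta}(1-\frac 2p)$, after which both arguments apply Lemma~\ref{lm1} to the two partial products and the Baker--Harman--Pintz bound $O(x^{0.525})$ on prime gaps to control the boundary primes. Your error bookkeeping, including the point that the full relative precision of Lemma~\ref{lm1} must be retained so the $\beta^2$ factor survives when $\beta-1=O(1/\log^2 x)$, is sound and consistent with the paper's treatment.
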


\begin{proof}
Let $p_s$ be the smallest prime greater than $x$ and similarly, $p_e$ be the smallest prime greater than $x^\beta$.

Now, $\displaystyle T(x,x^\beta)=\prod_{n\in I} \frac{p_n}{p_{n+1}-2} 
= \left(\frac{p_s-2}{p_e-2}\right)\frac{(1-\frac 12)\prod_{2<p\le x}(1-\frac 2p)}{(1-\frac 12)\prod_{2<p\le x^\beta}(1-\frac 2p)}$. 
Now using Lemma \ref{lm1}, we get, 
\begin{equation}
\begin{split}
T(x,x^\beta)=&\left(\frac{p_s-2}{p_e-2}\right)\frac{e^{-D}\log^{-2}x(1+O(\log^{-2}x))}{e^{-D}\log^{-2}x^\beta(1+O(\log^{-2}x^\beta))} \\
=&\beta^2 \frac{p_s-2}{p_e-2} \left(1+O(\frac{1}{\log^2 x})\right).
\end{split}
\label{eq3.1}
\end{equation}

We know from the prime number theorem that the average gap between the consecutive primes upto $x$ is $(1+o(1))\log x$. 
In fact it is conjectured, by Cram\'{e}r \cite{cramer36} and later with some refinement by Granville \cite{granville95}, that, if $G(x)$ is the largest gap 
between two consecutive primes upto $x$, then $G(x) = O(\log^2 x)$. 
Therefore, one would expect that $p_s = x + O(\log^t x)$ and $p_e = x^\beta + O(\log^t x)$, for some $t\ge 1$.

In this regard, 
currently the best unconditional result is due to Baker, Harman and Pintz \cite{baker01}. They showed that $G(x) = O(x^\theta)$, where $\theta = 0.525$. 
This result is sufficient to prove the lemma here. Taking $p_s = x + O(x^\theta)$ and $p_e = x^\beta + O(x^{\beta \theta})$ in Equation \ref{eq3.1}, we get,
\begin{equation}
\begin{split}
T(x,x^\beta)=&\beta^2\frac{x(1+O(\frac{1}{x^{1-\theta}}))}{x^\beta (1+O(\frac{1}{x^{\beta-\beta\theta}}))}\left(1+O(\frac{1}{\log^2 x})\right)\\ \nonumber
=&\frac{\beta^2}{x^{\beta-1}}\left(1+O(\frac{1}{\log^2 x})\right).
\end{split}
\end{equation}

\end{proof}
\section{Proof of Theorem \ref{th1}}
\label{sec4}

\begin{proof}
By Theorem \ref{th3}, $M_{\alpha}(x,x^\beta)\ge M_{\alpha'}(x,x^\beta)$ for $\alpha>\alpha'$. For the time being, we take $\beta>1$; 
its appropriate form will be mentioned soon. We are here, in particular, interested in the inequality  $M_{\infty}(x,x^\beta)\ge M_{0}(x,x^\beta)$, 
where $M_{\infty}(x,x^\beta)$ and
$M_{0}(x,x^\beta)$ respectively denote $\displaystyle \lim_{\alpha \rightarrow \infty}M_{\alpha}(x,x^\beta)$ and 
$\displaystyle \lim_{\alpha \rightarrow 0}M_{\alpha}(x,x^\beta)$. In the following we first argue that a strict inequality holds between the two means, i.e., 
$M_{\infty}(x,x^\beta)> M_{0}(x,x^\beta)$, and then we show that $M_{0}(x,x^\beta)=1-c/x^\beta+O(x^{-\beta}\log^{-1}x)$, where $c>0$ and
$\beta = 1+c/\log^2 x$ 
or equivalently, $x^\beta=x+cx/\log x+O(x/\log^2 x)$.

For the first part, we now note that $p_n\le p_{n+1}-2$ and, by Bertrand-Chebyshev theorem, $2p_n > p_{n+1}>p_{n+1}-2$. Therefore, $p_n\le p_{n+1}-2<2p_n$. This 
implies that $gcd(p_n,p_{n+1}-2)=1$, except when $p_n=p_{n+1}-2$. This observation helps us to determine if two elements of the set $S_{(x,x^\beta]}$ are equal or
not. Since the elements of $S_{(x,x^\beta]}$ are in the form $\frac{p_n}{p_{n+1}-2}$ ($=r_n$), the numerators of all the 
elements are different, except when the elements correspond to twin primes. Equivalently, we have $r_n \ne r_m$ whenever $n \ne m$, with an exception where both 
$p_n$ and $p_m$ are (first members of) twin primes. Since all the consecutive primes do not form twin prime pairs, we conclude 
that all elements in $S_{(x,x^\beta]}$ are not equal, and hence by Theorem \ref{th3}, 
$M_{\infty}(x,x^\beta)> M_{0}(x,x^\beta)$.

Next we analyze the mean $M_{0}(x,x^\beta)$. Let $\pi(x,x^\beta)$ denote the number of primes in $(x,x^\beta]$. 
If $\pi(x)$ denotes the number of primes upto $x$, we have $\pi(x) = \frac{x}{\log x}(1+O(\frac{1}{\log x}))$. 
Therefore, $\pi(x,x^\beta) = \pi(x^\beta)-\pi(x)=\frac 1\beta \frac{x^\beta}{\log x}(1+O(\frac{1}{\log x}))$. Using Lemma \ref{lm2}, we now have
\begin{equation}
\begin{split}
M_{0}(x,x^\beta) & = T(x,x^\beta)^{\frac{1}{\pi(x,x^\beta)}}\\
                 & =\left(\frac{\beta^2}{x^{\beta-1}}\right)^{\frac{1}{\pi(x,x^\beta)}}\left(1+O\left(\frac{1}{\pi(x,x^\beta)\log^2 x}\right)\right)\\
                 & = \left(\frac{\beta^2}{x^{\beta-1}}\right)^{\frac{1}{\pi(x,x^\beta)}}\left(1+O(x^{-\beta}\log^{-1} x)\right).
\end{split}
\label{eq4.1}
\end{equation}
Now to determine the main term, let $z=\left(\frac{\beta^2}{x^{\beta-1}}\right)^{\frac{1}{\pi(x,x^\beta)}}$. Taking logarithm on both sides, we have
\begin{equation}
\begin{split}
\log z & = \frac{2\log \beta - (\beta-1)\log x}{\pi(x,x^\beta)}\\
       & = \frac{2 \beta \log \beta \log x - \beta (\beta -1) \log^2 x}{x^\beta}\left(1+O(\frac{1}{\log x})\right).
\end{split}
\label{eq4.2}
\end{equation}
At this stage, we take $\beta = 1+ \frac{c}{\log^2 x}$, for a fixed $c>0$. Accordingly, we have $\beta \log \beta = \frac{c}{\log^2 x} + O(\frac{1}{\log^4 x})$, 
$\beta(\beta-1) = \frac{c}{\log^2 x}+O(\frac{1}{\log^4 x})$ and $x^\beta = x (x)^{c/\log^2 x} = x(1+\frac{c}{\log x}+O(\frac{1}{\log^2 x}))$. 
We now get from Equation \ref{eq4.2},
\begin{equation}
\begin{split}
\log z & = \left( \frac{2c}{x^\beta \log x}-\frac{c}{x^\beta}+O(\frac{1}{x^\beta \log^2 x})\right)  \left(1+O(\frac{1}{\log x})\right)\\
       & = -\frac{c}{x^\beta}+O(\frac{1}{x^\beta \log x}).
\end{split}
\label{eq4.3}
\end{equation}

This gives us $z = 1-\frac{c}{x^\beta}+O(\frac{1}{x^{\beta}\log x})$. Plugging this value of $z$ in Equation \ref{eq4.1}, we get
$M_{0}(x,x^\beta) = \left(1-\frac{c}{x^\beta}+O(\frac{1}{x^{\beta}\log x})\right)\left(1+O(\frac{1}{x^{\beta}\log x})\right) 
= 1-\frac{c}{x^\beta}+O(\frac{1}{x^{\beta}\log x})$. 

Finally we get $M_{\infty}(x,x^\beta)>M_{0}(x,x^\beta)=1-\frac{c}{x^\beta}+O(\frac{1}{x^{\beta}\log x})$.
\end{proof}


\bibliographystyle{amsplain}

\end{document}